 \newtheorem{thm}{Theorem}[section]
\newtheorem{prop}[thm]{Proposition}
\newtheorem{lem}[thm]{Lemma}
\newtheorem{rem}[thm]{Remark}
\newtheorem{defn}[thm]{Definition}
\newcommand{\say}[1]{``#1''}
\newcommand{\overbar}[1]{\mkern 1.5mu\overline{\mkern-1.5mu#1\mkern-1.5mu}\mkern 1.5mu}
\begin{document}
 \title[Bounding the multiplicities in terms circuit rank]{Bounding the multiplicities of eigenvalues of graph matrices in terms of circuit rank using a new approach }
 \author{Ahmet Batal}
 \address{Department of Mathematics, Izmir Institute of Technology, 35430, Urla, Izmir, TURKEY}
\email{ahmetbatal@iyte.edu.tr}
\begin{abstract}
Let $G$ be a simple undirected graph, $\theta(G)$ be the circuit rank of $G$, $\eta_M(G)$ and $m_M(G,\lambda)$  be the nullity and the multiplicity of eigenvalue $\lambda$ of a graph matrix $M(G)$, respectively.  In the case $M(G)$ is the adjacency matrix $A(G)$, (the Laplacian matrix $L(G)$, the signless Laplacian matrix $Q(G)$) we find bounds to $m_M(G,\lambda)$ in terms of $\theta(G)$ when $\lambda$ is an integer (even integer, respectively). We also show that when $\alpha$ and $\lambda$ are rational numbers similar bounds can be found for $m_{A_{\alpha}}(G,\lambda)$ where $A_{\alpha}(G)$ is the generalized adjaceny matrix of $G$. Our bounds contain only $\theta(G)$, not a multiple of it. Up to now only bounds of $m_A(G,\lambda)$ (and later $m_{A_\alpha}(G,\lambda)$)  have been found in terms of the circuit rank and all of them contains $2\theta(G)$. There is only one exception in the case $\lambda=0$. Wong et al. (2022) showed that $\eta_A(G_c)\leq \theta(G_c)+1$, where $G_c$ is a connected cactus whose blocks are even cycles.  Our result, in particular, generalizes and extends this result to the multiplicity of any even eigenvalue of A(G) of any even connected graph $G$, and of any even eigenvalue of $L(G)$ and $Q(G)$ of any connected graph $G$. They also showed that $\eta_A(G_c)\leq 1$ when every block of the cactus is an odd cycle. This also corresponds a special case of our bound. We obtain these results by following a new approach. Instead of bounding $\eta_M(G)$ we bound $\text{\tiny{$2$}-}\eta_{\overbar{M}}(G)$ where $\overbar{M}$ is the binary matrix whose entries are equal to the entries of $M$ modulo $2$ and  $\text{\tiny{$2$}-}\eta_{\overbar{M}}(G)$ is the nullity of $\overbar{M}$ over the finite field $\mathbb{F}_2$. We show that the latter nullity over $\mathbb{F}_2$ dominates  not only $\eta_M(G)$, but also any $m_M(G,\lambda)$ for even $\lambda$. Then to connect the multiplicities to $\theta(G)$ we use the fact that $\text{\tiny{$2$}-}\eta_{\overbar{L}}(G)=\beta(G)+1$ for any connected graph $G$ where $\beta(G)$ is the dimension of the bicycle space which is a subspace of the cycle space of $G$.

\end{abstract}

\subjclass[2020]{05C50}
\keywords{multiplicity, nullity, 2-nullity, bicycle space, cycle space, circuit rank}
 \maketitle

 \section*{Introduction}
Let $G$ be a simple undirected graph. We denote the number of vertices, edges, and connected components of $G$ by $\mathrm{v}(G)$, $\mathrm{e}(G)$, and $\mathrm{c}(G)$, respectively. Degree of a vertex is the number of its incident edges. We say $G$ is even (odd) if all of its vertices have even (odd) degree. We also denote the number of even degree vertices of $G$ by $\mathrm{v}_\mathrm{e}(G)$, number of odd degree vertices of $G$ by $\mathrm{v}_\mathrm{o}(G)$, and the number of even connected components of $G$ by $\mathrm{c}_\mathrm{e}(G)$.

 Let $V(G)=\{v_1,\cdots, v_n\}$ be the vertex set of $G$. Then the adjacency matrix  of $G$ is the  $n\times n$ symmetric matrix $A(G)=(a_{ij})$ where $a_{ij}=1$ if $v_i$ is adjacent to $v_j$ and $a_{ij}=0$, otherwise. $A(G)+I$ is called the closed neighborhood matrix of $G$, where $I$ is the $n\times n$  identity matrix. Let $D(G)$ be the $n\times n$ diagonal matrix whose ith diagonal entry is equal to the degree of $v_i$. Then, the Laplacian  matrix of $G$ is $L(G)=D(G)-A(G)$, and the signless Laplacian matrix of $G$ is $Q(G)=D(G)+A(G)$. We call all these matrices graph matrices of $G$. In general, we say $M(G)$ is a graph matrix of $G$ if $M(G)=\mathcal{D}(G)+tA(G)$  for some $n\times n$ diagonal matrix $\mathcal{D}(G)$ and $t\in \mathbb{R}$. We denote the multiplicity of an eigenvalue $\lambda$ of a graph matrix $M(G)$ by $m_M(G, \lambda)$. We denote the kernel space of a matrix $M$ over $\mathbb{R}$ by $Ker(M)$ and over $\mathbb{F}_2$ by $Ker_2(M)$. We denote their dimension by $\eta_M$ and $\text{\tiny{$2$}-}\eta_M$ and call them the nullity and $2$-nullity of $M$, respectively. In the case $M=M(G)$ is a graph matrix of $G$, we write $\eta_M=\eta_M(G)$ ($\text{\tiny{$2$}-}\eta_M=\text{\tiny{$2$}-}\eta_M(G)$) and call it the nullity ($2$-nullity) of $G$ with respect to $M(G)$. If $M(G)=A(G)$ we may just call it the nullity ($2$-nullity) of $G$. Clearly, $\eta_M(G)=m_M(G, 0)$ for all graph matrices $M$. Similarly, we denote the column space of a matrix $M$ over $\mathbb{R}$ by $Col(M)$, over $\mathbb{F}_2$ by $Col_2(M)$ and denote its rank
  over $\mathbb{R}$ by $r_M$, over $\mathbb{F}_2$ by $\text{\tiny{$2$}-}r_{M}$.

Let $E(G)=\{uv\;|\; \text{u is adjacent to v} \}$  be the edge set of $G$. Let us denote the collection of subsets of $E(G)$ by $Edge(G)$. It is a vector space over $\mathbb{F}_2$ where addition is taken as the symmetric difference operation of sets. 
Two well known subspaces of $Edge(G)$ are the cycle and cut spaces. The cycle space $Cycle(G)$ consists of even subgraphs of $G$.  We call the dimension $\theta(G)$ of $Cycle(G)$ the circuit rank of $G$. It satisfies
$$\theta(G)= (\text{e}-\text{v}+\text{c})(G).$$
On the other hand, every subset of vertices $S$ of $V(G)$ determines a cut set of edges $\{uv\in E(G) |\; u\in S,\; v\in S^c\},$ and the collection of all cut sets (or spanning cut subgraphs) $Cut(G)$ is the orthogonal complement of $Cycle(G)$. The intersection of $Cycle(G)$  and $Cut(G)$ is the bicycle space $Bicycle(G)$ and we denote its dimension by $\beta(G)$.

Bounding the nullity of any graph (or a specific graph type) using some graph parameters and characterizing the graphs whose nullities attain the given bound is an active area of research of graph theory (See \cite{MR2775770}, \cite{MR2915279}, \cite{MR3825702}, \cite{MR2775771}, \cite{MR2997803}, \cite{MR2116461}, \cite{MR2680264}, \cite{MR2859917}, \cite{MR2547911}, \cite{MR1823617}, \cite{MR2444330}, \cite{MR2950470}, \cite{MR2889577}, \cite{MR2312029}, \cite{MR3722834}, \cite{MR3828816}, \cite{MR3462998}, \cite{MR2506874}, \cite{MR1600771}, \cite{MR3274682}, \cite{MR3479403}, \cite{MR3459054}, \cite{MR2997815}, \cite{MR2166864}, \cite{MR3135927}, \cite{MR3834206}). In particular, finding bounds to $\eta_A(G)$ as well as $m_A(G, \lambda)$ in terms of $\theta(G)$  plus some additional graph parameters gained attention in recent years (See \cite{MR4078896}, \cite{MR4377152}, \cite{Wei20}, \cite{MR3848106}, \cite{Wong16}, \cite{Wang20}, \cite{Wong314}, \cite{MR4506593}, \cite{Wong22}, \cite{WongwithZhou22}, \cite{MR4069031}). One of the pioneering work in this direction is the work of Ma, Wong, and Tian \cite{Wong16}, in which they showed that for all graphs $G$ which does not contain isolated vertices,
\begin{equation}
\label{ineq2tp}
\eta_A(G)\leq (2\theta+ p)(G),
\end{equation} where $p(G)$ is the number of pendant (degree one) vertices of $G$. Then the generalization of this bound to any $m_A(G, \lambda)$ is obtained by Wang et al. in \cite{Wang20}.
However, considering the facts that $\mathrm{v}(G)$ is the trivial bound of  $\eta_A(G)$, and $2\theta(G)$ involves the term $2\mathrm{e}(G)-2\mathrm{v}(G)$, it is clear that this estimate is not applicable if $\mathrm{e}(G)\geq 3\mathrm{v}(G)/2$. So it would be better if we can replace the term $2\theta(G)$ by $\theta(G)$ in the bound of $\eta_A(G)$ at the cost of some additional terms if necessary. Although this new estimate may not be stronger than \eqref{ineq2tp} for all graph types, it would still be applicable to important graph types where \eqref{ineq2tp} is not applicable. As far as we know, the only result which gives a bound of  $\eta_A(G)$ involving $\theta(G)$ instead of $2\theta(G)$ was obtained in a recent paper \cite{Wong22} of Wong et al. However, they have been able to obtain this bound by putting strict assumptions on $G$. More precisely, they showed that if $G$ is a bipartite graph consisting of cycle blocks (thus, necessarily an even graph) then
\begin{equation}
\label{ineqtc}
\eta_A(G)\leq (\theta+ \mathrm{c})(G).
\end{equation}
As we will see in a moment these assumptions are not necessary. Indeed, \eqref{ineqtc} holds true for all even graphs $G$.

It seems like the classic approach which has been used up to now by the above authors has its own limits which makes it very hard if not impossible to obtain bounds of $\eta_A(G)$ involving $\theta(G)$ instead of $2\theta(G)$ for an arbitrary graph $G$. By classic approach we mean that the methods which use main inequalities satisfied by $\eta_A(G)$ under some graph operations and graph theoretical arguments which connects $\eta_A(G)$ to $\theta(G)$.

In this paper we introduce a new approach to connect $\eta_A(G)$ to $\theta(G)$  which we think more natural and also shorter than the classical one. The idea lies in the following simple observation:
$$\eta_A(G)\leq\text{\tiny{$2$}-}\eta_A(G)$$
for all graphs $G$ (See Lemma \ref{lemnull}). So instead of bounding $\eta_A(G)$ directly, we can try to bound $\text{\tiny{$2$}-}\eta_A(G)$ by $\theta(G)$ instead. This sounds more natural because both $\text{\tiny{$2$}-}\eta_A(G)$ and $\theta(G)$ are dimensions of linear spaces over the same field $\mathbb{F}_2$, namely $Ker_2(A(G))$ and $Cycle(G)$. So one can connect $\text{\tiny{$2$}-}\eta_A(G)$ and $\theta(G)$  more easily if one can find a linear algebraic correspondence between these spaces (or between spaces related to them). Indeed there is such a correspondence in the case $G$ is an even graph. To explain this let us write $\overbar{m}:= m\mod{2}$ for an integer $m$ and $\overbar{M}:=(\overbar{m_{ij}})$ for an integer valued matrix $M= (m_{ij})$. Then $Ker_2(A(G))$ is equal to $Ker_2(\overbar{L}(G))$ if $G$ is even and a well known connection (\cite[Lemma 14.15.3]{GodsilRoyle01}) between the latter space and the bicycle space gives us
\begin{equation}
\label{ineqtc2}
\text{\tiny{$2$}-}\eta_{A}(G)= (\beta+\text{c})(G)\leq (\theta+\text{c})(G),
\end{equation}
when $G$ is an even graph. Hence we immediately obtain \eqref{ineqtc} for any even graph $G$ using this approach.
Moreover, we can generalize \eqref{ineqtc2} to any graph by a simple argument at the cost of adding the term $\mathrm{v}_\mathrm{o}(G)$ to the right hand side (See Theorem \ref{thmev}). This shows how powerful and practical following this approach might be considering the fact that obtaining \eqref{ineqtc} even only for very special graphs takes effort when we follow the classical approach.

More importantly, applicability of this approach is not restricted to finding bounds of $\eta_A(G)$. Indeed, $\text{\tiny{$2$}-}\eta_A(G)$  bounds not only $\eta_A(G)$ but also any  $m_A(G, \lambda)$ when $\lambda$ is an even integer (See Lemma \ref{lemnull}). On the other hand, for odd eigenvalues $\lambda$, we have $m_A(G, \lambda) \leq \eta_{A+I}(G)$, where the latter nullity of $G$ with respect to $A(G)+I$ can be bounded by $(\theta+\text{c}+\mathrm{v}_\mathrm{e})(G)$ (See Theorem \ref{thmev}).

Adding the terms $\mathrm{v}_\mathrm{e}(G)$ or $\mathrm{v}_\mathrm{o}(G)$ to the corresponding bounds in the general case seems inevitable but there is a graph parameter that we call $\tau(G)$ which we can subtract from these bounds as well. To define it, let us say a cycle is even (odd) if it has even (odd) number of edges and two cycles are disjoint if they do not share any edge. Then we define $\tau(G)$ as the maximum of the cardinalities of the sets of disjoint odd cycles of $G$. The reason why we can subtract $\tau(G)$ from the bounds is because of the fact that $\tau(G)$ corresponds to the dimension of a subspace of $Cycle(G)$  which intersects $Bicycle(G)$ only trivially (See Lemma \ref{lemtau} and the discussion above it).

Similarly, the same approach can be directly used to estimate the multiplicities of the integer eigenvalues of $A(G)+I$, $L(G)$ and $Q(G)$ as well. More precisely, we have the following theorem as our main result.

\begin{thm}
\label{thethm} Let $G$ be a graph and $\lambda$ be an integer eigenvalue of the corresponding graph matrix. Then
\begin{align}
m_A(G, \lambda) &\leq (\theta-\tau+\mathrm{v}_\mathrm{o}+2\mathrm{c}_\mathrm{e}-\mathrm{c}+2i)(G) &                       &\text{if $\lambda$ is even},\nonumber\\
m_A(G, \lambda) &\leq (\theta-\tau+\mathrm{v}_\mathrm{e}+\mathrm{c})(G) &                     &\text{if $\lambda$ is odd},\nonumber\\
m_L(G, \lambda), m_Q(G, \lambda)  &\leq (\theta-\tau+\mathrm{c})(G) &   &\text{if $\lambda$ is even}, \nonumber \\
m_L(G, \lambda), m_Q(G, \lambda) &\leq (\mathrm{v}-\beta-\mathrm{c})(G)&   &\text{if $\lambda$ is odd}, \nonumber
\end{align}
where $i(G)=0$ if $G$ is even and $i(G)=1$ otherwise.
\end{thm}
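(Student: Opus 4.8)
The plan is to treat all four inequalities uniformly through the single device of Lemma~\ref{lemnull}: for an integer eigenvalue $\lambda$ of an integer-valued graph matrix $M$ we have $m_M(G,\lambda)=\eta_{M-\lambda I}(G)\le\text{\tiny{$2$}-}\eta_{\overbar{M-\lambda I}}(G)$, and the reduction $\overbar{M-\lambda I}$ equals $\overbar{M}$ when $\lambda$ is even and $\overbar{M}+I$ when $\lambda$ is odd. Since $\overbar{A}=A$ and $\overbar{L}=\overbar{Q}=A+\overbar{D}$ over $\mathbb{F}_2$, where $\overbar{D}$ is the diagonal $0/1$ matrix marking the odd-degree vertices, the entire theorem reduces to estimating the four $2$-nullities $\text{\tiny{$2$}-}\eta_A$, $\text{\tiny{$2$}-}\eta_{A+I}$, $\text{\tiny{$2$}-}\eta_{\overbar{L}}$ and $\text{\tiny{$2$}-}\eta_{\overbar{L}+I}$. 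Throughout I use the identity $\text{\tiny{$2$}-}\eta_{\overbar{L}}(G)=(\beta+\mathrm{c})(G)$ together with $\beta(G)\le(\theta-\tau)(G)$ from Lemma~\ref{lemtau}.

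The Laplacian and signless Laplacian bounds then fall out at once. For even $\lambda$, $m_L(G,\lambda),\,m_Q(G,\lambda)\le\text{\tiny{$2$}-}\eta_{\overbar{L}}(G)=(\beta+\mathrm{c})(G)\le(\theta-\tau+\mathrm{c})(G)$. For odd $\lambda$ the governing matrix is $\overbar{L}+I$, and here I invoke the elementary fact that, over any field, $N+I$ acts as the identity on $Ker_2(N)$; hence $Col_2(N+I)\supseteq Ker_2(N)$, so $\text{\tiny{$2$}-}r_{N+I}\ge\text{\tiny{$2$}-}\eta_N$ and therefore $\text{\tiny{$2$}-}\eta_{N+I}\le\text{\tiny{$2$}-}r_N$. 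With $N=\overbar{L}$ this yields $m_L(G,\lambda),\,m_Q(G,\lambda)\le\text{\tiny{$2$}-}\eta_{\overbar{L}+I}(G)\le\text{\tiny{$2$}-}r_{\overbar{L}}(G)=(\mathrm{v}-\beta-\mathrm{c})(G)$.

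The adjacency bound for odd $\lambda$ uses the rank-perturbation principle that adding a matrix of rank $r$ changes the $2$-nullity by at most $r$. Because $A+I=\overbar{L}+(\overbar{D}+I)$ and $\overbar{D}+I$ is the diagonal $0/1$ matrix marking the even-degree vertices, of rank $\mathrm{v}_\mathrm{e}(G)$, we get $\text{\tiny{$2$}-}\eta_{A+I}(G)\le\text{\tiny{$2$}-}\eta_{\overbar{L}}(G)+\mathrm{v}_\mathrm{e}(G)=(\beta+\mathrm{c}+\mathrm{v}_\mathrm{e})(G)\le(\theta-\tau+\mathrm{v}_\mathrm{e}+\mathrm{c})(G)$, as claimed.

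The one genuinely delicate case is the adjacency bound for even $\lambda$, where the crude estimate $A=\overbar{L}+\overbar{D}$ only gives $\text{\tiny{$2$}-}\eta_A\le\beta+\mathrm{c}+\mathrm{v}_\mathrm{o}$, whereas the target replaces $+\mathrm{c}$ by $-\mathrm{c}+2\mathrm{c}_\mathrm{e}+2i$. When $G$ is even, $\overbar{D}=0$, so $A=\overbar{L}$ over $\mathbb{F}_2$ and $\text{\tiny{$2$}-}\eta_A=(\beta+\mathrm{c})(G)$; substituting $i=0$, $\mathrm{v}_\mathrm{o}=0$, $\mathrm{c}_\mathrm{e}=\mathrm{c}$ shows the target equals $(\theta-\tau+\mathrm{c})(G)$, which dominates. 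For non-even $G$ I pass to the graph $\tilde{G}$ obtained by adjoining a single new vertex $w$ joined to every odd-degree vertex of $G$; as the number of odd-degree vertices is even, $\tilde{G}$ is an even graph, so $\text{\tiny{$2$}-}\eta_{A(\tilde{G})}=\beta(\tilde{G})+\mathrm{c}(\tilde{G})$. A direct count gives $\mathrm{c}(\tilde{G})=\mathrm{c}_\mathrm{e}(G)+1$ and $\theta(\tilde{G})=(\theta+\mathrm{v}_\mathrm{o}-\mathrm{c}+\mathrm{c}_\mathrm{e})(G)$, while $\tau(\tilde{G})\ge\tau(G)$ since the disjoint odd cycles of $G$ persist in $\tilde{G}$; moreover bordering $A(G)$ by one row and column raises the $2$-rank by at most $2$, whence $\text{\tiny{$2$}-}\eta_A(G)\le\text{\tiny{$2$}-}\eta_{A(\tilde{G})}+1$. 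Feeding these through $\beta(\tilde{G})\le\theta(\tilde{G})-\tau(\tilde{G})$ assembles precisely the target with $i=1$. I expect this assembly to be the main obstacle: choosing $\tilde{G}$ as the correct even reduction and then propagating $\mathrm{c}$, $\theta$, $\tau$ and the $2$-rank jump through the bordering step so that the extra $+2i$ and the $2\mathrm{c}_\mathrm{e}-\mathrm{c}$ bookkeeping come out exactly rather than off by a unit; the remaining three cases are immediate once the reduction of Lemma~\ref{lemnull} and the identity $\text{\tiny{$2$}-}\eta_{\overbar{L}}=\beta+\mathrm{c}$ are available.
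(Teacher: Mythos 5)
Your proof is correct. Its skeleton coincides with the paper's: reduce via Lemma~\ref{lemnull} to the four $2$-nullities, use Proposition~\ref{propmain} ($\text{\tiny{$2$}-}\eta_{\overbar{L}}(G)=(\beta+\mathrm{c})(G)$) and Lemma~\ref{lemtau} ($\beta\leq\theta-\tau$), and in the hardest case (adjacency, $\lambda$ even, $G$ not even) your $\tilde{G}$ is exactly the paper's $G_{even}$, with identical bookkeeping: $\mathrm{c}(\tilde{G})=\mathrm{c}_\mathrm{e}(G)+1$, $\theta(\tilde{G})=(\theta+\mathrm{v}_\mathrm{o}+\mathrm{c}_\mathrm{e}-\mathrm{c})(G)$, $\tau(\tilde{G})\geq\tau(G)$, and the one-unit nullity jump from bordering (the paper phrases it as vertex deletion in Lemma~\ref{lem8}(i)). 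Your assembly does land exactly on $(\theta-\tau+\mathrm{v}_\mathrm{o}+2\mathrm{c}_\mathrm{e}-\mathrm{c}+2)(G)$, so the hesitation in your closing sentence is unwarranted. Where you genuinely diverge from the paper is the odd-$\lambda$ adjacency bound: the paper builds a second auxiliary graph $G_{odd}$ (a pendant edge at every even-degree vertex), observes that $P(G_{odd})=A(G_{odd})+I$, and peels off the $\mathrm{v}_\mathrm{e}(G)$ pendant vertices by interlacing, whereas you write $A+I=\overbar{L}+(\overbar{D}+I)$ over $\mathbb{F}_2$ and invoke the fact that a perturbation of rank $\mathrm{v}_\mathrm{e}(G)$ moves the $2$-nullity by at most $\mathrm{v}_\mathrm{e}(G)$. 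Your route is shorter, needs no auxiliary graph, and even gives the slightly sharper intermediate bound $(\beta+\mathrm{c}+\mathrm{v}_\mathrm{e})(G)$; what the paper's route buys is uniformity, since Lemma~\ref{lem8} treats both parities with the same graph-operation machinery. Finally, your odd-$\lambda$ Laplacian argument is the mirror image of the paper's Theorem~\ref{thmtirt}: you embed $Ker_2(\overbar{L})$ into $Col_2(\overbar{L}+I)$ and apply rank--nullity twice, while the paper embeds $Ker_2(P+I)$ into $Col_2(P)$; both give $(\mathrm{v}-\beta-\mathrm{c})(G)$.
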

The proof is given in Section $1$.
Note that for $m_L(G, \lambda)$ and  $m_Q(G, \lambda)$ when $\lambda$ is odd we cannot obtain a bound  in terms of $\theta(G)$ unlike the even case. This is something expected since it is known that $m_L(G, 1)\geq (p-q)(G)$ where $q(G)$ is the number of vertices adjacent to a pendant vertex of $G$ \cite{Faria85}. Hence, even if $\theta(G)$ is small, $m_L(G, \lambda)$ can be very large for odd $\lambda$. It might be the case that the multiplicities of odd eigenvalues are bounded in terms of $\theta(G)$ with some additional terms involving at least $(p-q)(G)$. However, such a bound, if it exists, cannot be found using solely our approach, it must involve some other arguments as well.

We already mentioned that Wang et al. in \cite{Wang20} generalized \eqref{ineq2tp} to any $m_A(G, \lambda)$. Later Shuchao and Wei in \cite{Wei20} generalized the latter result further to any $m_{A_\alpha}(G, \lambda)$, where $A_\alpha(G)$ is the graph matrix defined by $A_\alpha(G):=\alpha D(G) +(1-\alpha) A(G)$ where $0\leq \alpha \leq 1.$ $A_\alpha(G)$ matrix was first introduced by Nikiforov in \cite{Nikiforov17} and its spectral analysis gained a lot of attention later on. We show that our approach is applicable to this graph matrix as well in the case where both $\alpha$ and $\lambda$ are rational numbers. The bound of $m_{A_\alpha}(G, \lambda)$ changes drastically depending on the representations of $\alpha$ and $\lambda$ in their lowest forms as ratio of two integers. The results are summarized Section $2$ in Theorem \ref{thmalpha}.

In the last section we give examples of some types of graphs attaining the corresponding bounds in Theorem \ref{thethm}.

\section{Proof of Theorem \ref{thethm}}

\begin{lem}
\label{lemnull}
Let $M$ be an integer valued matrix. Then $$\eta_M\leq \text{\tiny{$2$}-}\eta_{\overbar{M}}.$$
Moreover, if $M$ is a graph matrix of $G$, then
\begin{align}
\quad \quad \quad\quad \quad \quad m_M(G, \lambda)&\leq \text{\tiny{$2$}-}\eta_{\overbar{M}}(G)           &          &\text{if $\lambda$ is an even integer},\nonumber\\
\quad \quad \quad\quad \quad \quad m_M(G, \lambda)&\leq \text{\tiny{$2$}-}\eta_{\overbar{M}+I}(G)         &          &\text{if $\lambda$ is an odd integer}.\nonumber
\end{align}
\end{lem}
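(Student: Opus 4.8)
The plan is to isolate one elementary fact about matrix rank under reduction modulo $2$ and then deduce every inequality from it by applying it to the shifted matrix $M-\lambda I$.

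First I would prove the rank comparison $\text{\tiny{$2$}-}r_{\overbar{M}}\leq r_M$ for any integer valued matrix $M$. The argument I have in mind: if $\overbar{M}$ has rank $k$ over $\mathbb{F}_2$, then some $k\times k$ submatrix of $\overbar{M}$ has nonzero determinant in $\mathbb{F}_2$, i.e. the corresponding $k\times k$ integer minor of $M$ is odd, hence nonzero over $\mathbb{Q}$. Since the rank of an integer matrix over $\mathbb{R}$ coincides with its rank over $\mathbb{Q}$, this forces $r_M\geq k=\text{\tiny{$2$}-}r_{\overbar{M}}$. With both $M$ and $\overbar{M}$ of the same size $n$, the rank-nullity theorem over the two fields gives $\eta_M=n-r_M$ and $\text{\tiny{$2$}-}\eta_{\overbar{M}}=n-\text{\tiny{$2$}-}r_{\overbar{M}}$, so the rank comparison immediately yields $\eta_M\leq\text{\tiny{$2$}-}\eta_{\overbar{M}}$, the first assertion.

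For the eigenvalue statements I would apply this first assertion to the matrix $M-\lambda I$. When $M=M(G)$ is a graph matrix and $\lambda$ is an integer, $M-\lambda I$ is again integer valued and $m_M(G,\lambda)=\eta_{M-\lambda I}$, so the first part already gives $m_M(G,\lambda)\leq\text{\tiny{$2$}-}\eta_{\overbar{M-\lambda I}}$. It then remains only to identify $\overbar{M-\lambda I}$ by parity: since reduction modulo $2$ respects addition and scalar multiplication, $\overbar{M-\lambda I}=\overbar{M}-\overbar{\lambda}\,I$, which equals $\overbar{M}$ when $\lambda$ is even and $\overbar{M}+I$ when $\lambda$ is odd (using $-1=1$ in $\mathbb{F}_2$). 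Substituting gives the even and odd cases, respectively.

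The only place that requires care---and what I would flag as the main obstacle---is the direction of the rank comparison in the first step. The naive attempt of mapping the real kernel into $Ker_2(\overbar{M})$ by reducing kernel vectors modulo $2$ does not obviously preserve dimension, since this reduction need not be injective. Phrasing the comparison in terms of ranks (equivalently, nonvanishing minors) is what makes the inequality go through cleanly, and one must be careful to lift an \emph{odd} $\mathbb{F}_2$-minor to a nonzero rational minor rather than attempting the reverse.
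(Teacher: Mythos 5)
Your proof is correct, and its overall skeleton is the same as the paper's: both reduce everything to the single rank inequality $\text{\tiny{$2$}-}r_{\overbar{M}}\leq r_M$, pass to nullities via rank--nullity, and then obtain both eigenvalue statements by applying the first part to $M-\lambda I$ and observing that $\overbar{M-\lambda I}=\overbar{M}+\overbar{\lambda}I$. Where you genuinely differ is in how that rank inequality is established. The paper lifts linear independence directly: it takes columns $c_1,\dots,c_k$ of $M$ whose reductions are independent over $\mathbb{F}_2$, notes that any real dependence among them can be assumed to have integer coefficients not all even (after dividing out a power of $2$), and reduces mod $2$ to force all coefficients to vanish. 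You instead invoke the determinantal characterization of rank: if $\text{\tiny{$2$}-}r_{\overbar{M}}=k$, some $k\times k$ minor of $\overbar{M}$ is nonzero over $\mathbb{F}_2$, and since the determinant commutes with the ring homomorphism $\mathbb{Z}\to\mathbb{F}_2$, the corresponding integer minor of $M$ is odd, hence nonzero over $\mathbb{Q}$, and rank is unchanged under the extension $\mathbb{Q}\subseteq\mathbb{R}$. Both arguments are elementary and of comparable length; yours has the advantage of working verbatim with $2$ replaced by any prime $p$, while the paper's avoids determinants and stays at the level of linear combinations. (A trivial remark: your rank--nullity step should be stated with $n$ the number of columns, since the first claim of the lemma allows rectangular $M$; nothing else changes.) Your closing caution is also exactly the point the paper's argument is designed around: reducing a kernel basis mod $2$ can collapse dimension, so the comparison must be run through ranks, in the direction you describe.
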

\begin{proof}

For the proof of the first inequality, we will show that $r_M \geq \text{\tiny{$2$}-}r_{\overbar{M}}$. Then the result follows by the rank nullity theorem.

Take a set of column vectors $\{c_1,\dots, c_k\}$ of $M$  such that $\{\overbar{c_1},\dots, \overbar{c_k}\}$ is a linearly independent set in $\mathbb{F}_2^n$ where $n$ is the number of rows of $M$. Consider the following equation
\begin{equation}
\label{eqindep}
\alpha_1c_1+\cdots + \alpha_kc_k=0,
\end{equation}
where $\alpha_i \in \mathbb{R}$ for $i=1,\dots, k$. Since each $c_i$ has integer entries, \eqref{eqindep} has a nontrivial solution in $\mathbb{R}$ if and only if it has a nontrivial solution in $\mathbb{Z}$. So without loss of generality, we can assume all $\alpha_i$'s are integer. Dividing \eqref{eqindep} by a suitable power of $2$ if necessary, we can further assume that all $\alpha_i$'s are integers such that not all of them are even unless all of them are zero. Then the corresponding column vectors in $\mathbb{F}_2^n$ satisfy

\begin{equation}
\label{eqindep2}
\overbar{\alpha_1}\overbar{c_1}+\cdots + \overbar{\alpha_k}\overbar{c_k}=0,
\end{equation}
which has only the trivial solution since $\{\overbar{c_1},\dots, \overbar{c_k}\}$ is linearly independent. This means every $\alpha_i$ is even and this is only possible if every $\alpha_i$ is zero by our assumption. Hence, $\{c_1,\dots, c_k\}$ is a linearly independent set of $\mathbb{R}^n$. This proves $r_M \geq \text{\tiny{$2$}-}r_{\overbar{M}}$.

For the last two inequalities just note that
$$ m_M(G, \lambda)=\eta_{M-\lambda I}(G) \leq \text{\tiny{$2$}-}\eta_{\overbar{M-\lambda I}}(G)= \text{\tiny{$2$}-}\eta_{\overbar{M}+\overbar{\lambda}I}(G).$$
\end{proof}

Let $P(G):=\overbar{L}(G)=\overbar{Q}(G)$. We call it the parity Laplacian matrix of $G$. We have the following elementary fact which can be found in \cite[Lemma 14.15.3]{GodsilRoyle01}:

 \begin{prop}$($\cite{GodsilRoyle01}$)$
 \label{propmain}
 For any graph $G$
\begin{equation}
 \text{\tiny{$2$}-}\eta_{P}(G)= (\beta+\mathrm{c})(G).
\end{equation}
\end{prop}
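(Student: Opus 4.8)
The plan is to realize the parity Laplacian as a Gram-type product of the $\mathbb{F}_2$ incidence matrix and then read off its kernel dimension through the cycle and cut spaces. Let $N$ be the $\mathrm{v}(G)\times\mathrm{e}(G)$ vertex--edge incidence matrix of $G$ viewed over $\mathbb{F}_2$, so that the $(v,e)$ entry is $1$ exactly when $v$ is an endpoint of $e$. First I would check the identity $P(G)=NN^{\top}$ over $\mathbb{F}_2$: for $i=j$ the $(i,i)$ entry of $NN^{\top}$ counts the edges incident to $v_i$ and hence equals $\deg(v_i)\bmod 2$, while for $i\neq j$ the $(i,j)$ entry counts the edges joining $v_i$ and $v_j$, which is $\overbar{a_{ij}}$ because $G$ is simple. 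Since $\overbar{L}=\overbar{D}-\overbar{A}=\overbar{D}+\overbar{A}$ over $\mathbb{F}_2$, this product is precisely $P(G)$.

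Next I would recall the standard dictionary relating $N$ to the subspaces of $Edge(G)$. A vector $x\in\mathbb{F}_2^{\mathrm{e}(G)}$ lies in $Ker_2(N)$ if and only if every vertex has even degree in the subgraph it determines, so $Ker_2(N)=Cycle(G)$; dually, $Col_2(N^{\top})$ is spanned by the coboundaries of single vertices, which are exactly the elementary cut sets, whence $Col_2(N^{\top})=Cut(G)$. Finally, $N^{\top}y=0$ forces $y$ to be constant on each connected component, so $\dim Ker_2(N^{\top})=\mathrm{c}(G)$.

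With these facts the computation becomes purely linear-algebraic. Writing $\phi\colon\mathbb{F}_2^{\mathrm{v}(G)}\to\mathbb{F}_2^{\mathrm{e}(G)}$ for the map $y\mapsto N^{\top}y$, a vector $y$ lies in $Ker_2(P)=Ker_2(NN^{\top})$ if and only if $\phi(y)\in Ker_2(N)=Cycle(G)$, i.e.\ $y\in\phi^{-1}(Cycle(G))$. Because $Col_2(N^{\top})=Cut(G)$ is the image of $\phi$, the field-independent preimage identity $\dim\phi^{-1}(S)=\dim Ker_2(\phi)+\dim\bigl(S\cap Col_2(N^{\top})\bigr)$ applied with $S=Cycle(G)$ gives
\[
\text{\tiny{$2$}-}\eta_{P}(G)=\dim Ker_2(N^{\top})+\dim\bigl(Cycle(G)\cap Cut(G)\bigr)=\mathrm{c}(G)+\beta(G),
\]
which is the claimed equality.

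I expect the only delicate points to be bookkeeping: confirming that $P=NN^{\top}$ carries the correct diagonal parities (this is where simplicity of $G$ is used, to force the off-diagonal counts into $\{0,1\}$) and matching the subspace identities $Ker_2(N)=Cycle(G)$ and $Col_2(N^{\top})=Cut(G)$ against the definitions of the cycle and cut spaces given earlier. Everything past that is the preimage-dimension formula, so I anticipate no serious obstacle beyond making the $\mathbb{F}_2$ incidence-matrix dictionary precise.
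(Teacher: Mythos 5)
Your proof is correct. Note that the paper does not prove this proposition at all---it is quoted as a known result, citing Lemma 14.15.3 of Godsil and Royle---so there is no in-paper argument to compare against; what you have written is essentially the standard textbook proof behind that citation. All three ingredients check out: the factorization $P(G)=NN^{\top}$ over $\mathbb{F}_2$ (valid since $G$ is simple), the identifications $Ker_2(N)=Cycle(G)$, $Col_2(N^{\top})=Cut(G)$, $\dim Ker_2(N^{\top})=\mathrm{c}(G)$, and the preimage-dimension formula, which correctly replaces the over-$\mathbb{R}$ shortcut $Ker(NN^{\top})=Ker(N^{\top})$ that fails over $\mathbb{F}_2$ (and whose failure is precisely why the bicycle term $\beta(G)$ appears).
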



Since the bicycle space is a subspace of the cycle space, the above lemma immediately implies
\begin{equation}
\label{inqpre}
\text{\tiny{$2$}-}\eta_{P}(G)\leq (\theta+\mathrm{c})(G).
\end{equation}
However this inequality can be improved. Note that, in general, $Bicycle(G)$ is a proper subspace of $Cycle(G)$. It consists of those even subgraphs of $G$ which are cuts as well. Let us call these subgraphs even cuts of $G$. Since $Cycle(G)$ and $Cut(G)$ are orthogonal to each other under the dot product \say{$\cdot$}, defined by $A\cdot B:= |A \cap B| \mod 2$, a necessary condition to be an even cut is to be orthogonal to every even subgraph. In other words, an even cut must share even number of edges with every even subgraph including itself. So trivially, no odd cycle or no subgraph including an odd cycle can be an even cut. This observation leads us to the following lemma.

\begin{lem}
\label{lemtau}
For any graph $G$, $$\beta(G) \leq (\theta-\tau)(G).$$
\end{lem}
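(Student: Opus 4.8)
The plan is to realize $\tau(G)$ as the dimension of a subspace $W$ of $Cycle(G)$ that meets $Bicycle(G)$ only in the zero vector. Once this is done, the sum $W + Bicycle(G)$ lies inside $Cycle(G)$ and has dimension $\tau(G) + \beta(G)$, so the desired inequality $\beta(G) \le \theta(G) - \tau(G)$ follows immediately from $\dim Cycle(G) = \theta(G)$ together with $W \cap Bicycle(G) = \{0\}$.

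To build $W$, I would fix a family $C_1, \dots, C_{\tau}$ of pairwise edge-disjoint odd cycles of maximum cardinality $\tau = \tau(G)$, and regard each $C_i$ as a vector in $Cycle(G)$ via its edge set (a single cycle is an even subgraph, hence an element of the cycle space, whether or not its edge count is odd). Since the $C_i$ are pairwise edge-disjoint, any nontrivial $\mathbb{F}_2$-combination $\sum_{i \in S} C_i$ with $S \neq \emptyset$ has nonempty support, because every edge of a chosen $C_j$ with $j \in S$ survives the sum, lying in no other $C_i$. Hence the $C_i$ are linearly independent and $W := \mathrm{span}\{C_1, \dots, C_\tau\}$ has dimension $\tau$.

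The heart of the argument is the trivial-intersection claim, and it rests on a single computation of the dot product $\cdot$ restricted to $W$. Because each $C_i$ is an odd cycle, $C_i \cdot C_i = |C_i| \bmod 2 = 1$, while for $i \neq j$ edge-disjointness gives $C_i \cdot C_j = |C_i \cap C_j| \bmod 2 = 0$. Thus the Gram matrix of $C_1, \dots, C_\tau$ with respect to $\cdot$ is the $\tau \times \tau$ identity over $\mathbb{F}_2$, so this form is nondegenerate on $W$. Now take any $x = \sum_{i \in S} C_i$ lying in $Bicycle(G) = Cycle(G) \cap Cut(G)$. Since $Cut(G) = Cycle(G)^{\perp}$, the vector $x$ is orthogonal to every cycle, so for each $j$ we obtain $0 = x \cdot C_j = \sum_{i \in S}(C_i \cdot C_j)$; as the Gram matrix is the identity, this sum equals $1$ when $j \in S$ and $0$ otherwise, forcing $S = \emptyset$ and hence $x = 0$. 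Therefore $W \cap Bicycle(G) = \{0\}$.

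The only step requiring care is this last one; everything else is bookkeeping. The point to get right is that membership in $Bicycle(G)$ supplies exactly the orthogonality relations $x \cdot C_j = 0$ needed to annihilate each coordinate of $x$ in the generating set $C_1, \dots, C_\tau$, and that this works precisely because the restricted form is nondegenerate on $W$. I expect no genuine obstacle beyond making this nondegeneracy explicit and confirming the two cases of the dot product computation.
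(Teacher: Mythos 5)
Your proof is correct and takes essentially the same approach as the paper: both realize $\tau(G)$ as the dimension of the span of a maximum family of pairwise edge-disjoint odd cycles inside $Cycle(G)$, show this span meets $Bicycle(G)$ only trivially, and conclude by counting dimensions. Your Gram-matrix computation is just an explicit rendering of the paper's observation that an element of $Cut(G)$ must be orthogonal to every cycle, whereas any nonzero sum of disjoint odd cycles has odd intersection with one of its constituent cycles.
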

\begin{proof}
Let $\mathcal{T}$ be a set of disjoint odd cycles of $G$ with $|\mathcal{T}|=\tau(G)$. Note that the span of $\mathcal{T}$, $Span(\mathcal{T})$, is a subspace of $Cycle(G)$ with dimension $\tau(G)$ and $Bicycle(G)\cap Span(\mathcal{T}) =\{\emptyset\}$, where $\emptyset$ denotes the empty spanning subgraph of $G$. Therefore, $ Bicycle(G)+Span(\mathcal{T})$  is a subspace of $Cycle(G)$ with dimension $(\beta+\tau)(G)$. Hence, $(\beta+\tau)(G) \leq \theta(G)$.
\end{proof}
Proposition \ref{propmain} and Lemma \ref{lemtau} give us the following.
\begin{thm}
\label{thmmain}
For any graph $G$
\begin{equation}
\label{inqmain}
\text{\tiny{$2$}-}\eta_{P}(G)\leq (\theta-\tau+\mathrm{c})(G).
\end{equation}
\end{thm}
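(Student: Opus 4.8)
The plan is to combine the two results that immediately precede the statement, since together they supply exactly the identity and the one-sided estimate needed. The target inequality has a single quantity $\text{\tiny{$2$}-}\eta_{P}(G)$ on the left and the three terms $\theta(G)$, $\tau(G)$, $\mathrm{c}(G)$ on the right, so the natural strategy is to first convert the left-hand side into an expression involving $\beta(G)$ and $\mathrm{c}(G)$, and then bound $\beta(G)$ from above.

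First I would invoke Proposition \ref{propmain}, which gives the exact identity $\text{\tiny{$2$}-}\eta_{P}(G) = (\beta + \mathrm{c})(G)$. This replaces the $2$-nullity of the parity Laplacian by the bicycle-space dimension plus the number of connected components, so the problem reduces to controlling $\beta(G)$ in isolation while carrying the additive constant $\mathrm{c}(G)$ along unchanged.

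Next I would apply Lemma \ref{lemtau}, namely $\beta(G) \leq (\theta - \tau)(G)$. Adding $\mathrm{c}(G)$ to both sides and substituting into the identity of the previous step gives $\text{\tiny{$2$}-}\eta_{P}(G) = (\beta + \mathrm{c})(G) \leq (\theta - \tau + \mathrm{c})(G)$, which is precisely the assertion.

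I do not expect a genuine obstacle here: the statement is a formal corollary obtained by chaining an exact formula with an inequality, and all the substantive content has already been handled upstream, in the identification of $Ker_2(P(G))$ with the bicycle space (Proposition \ref{propmain}) and in the realization of $Span(\mathcal{T})$ as a complement to $Bicycle(G)$ inside $Cycle(G)$ for a maximal family of disjoint odd cycles (Lemma \ref{lemtau}). The only point deserving a moment of care is the bookkeeping of the constant $\mathrm{c}(G)$, so that it is transported correctly from the bound on $\beta(G)$ to the final bound on $\text{\tiny{$2$}-}\eta_{P}(G)$.
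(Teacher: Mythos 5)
Your proposal is correct and is exactly the paper's argument: the paper states Theorem \ref{thmmain} as an immediate consequence of Proposition \ref{propmain} and Lemma \ref{lemtau}, chained precisely as you describe. No gaps — the substitution of $\beta(G)\leq(\theta-\tau)(G)$ into $\text{\tiny{$2$}-}\eta_{P}(G)=(\beta+\mathrm{c})(G)$ is all that is needed.
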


Note that $P(G)=\overbar{D}(G)+A(G)$ where $\overbar{D}$ is the diagonal matrix whose diagonal entries are the parity of the degrees of the corresponding vertices. Hence, $P(G)=A(G)$ if $G$ is even, and $P(G)=A(G)+I$ if $G$ is odd. Therefore, from \eqref{inqmain} we obtain
 \begin{align}
 \label{inqeven} \text{\tiny{$2$}-}\eta_{A}(G)&\leq (\theta-\tau+\text{c})(G) & &\text{if $G$ is even},\\
 \label{inqodd} \text{\tiny{$2$}-}\eta_{A+I}(G) &\leq (\theta-\tau+\text{c})(G) & &\text{if $G$ is odd}.
\end{align}

\begin{rem}
 Recently, Wong et al. in \cite{Wong22} proved that $\eta_A(G) \leq \theta(G) +1$ for bipartite, cycle spliced graphs (connected even graphs consisting of cycle blocks). On the other hand, Lemma \ref{lemnull} and \eqref{inqeven} show us that this is true for any connected even graph. They also showed in \cite[Theorem 1.2]{Wong22} that if all cycles of a cycle spliced graph are odd, then $\eta_A(G)\leq 1$. This fact can also be realized by our approach.
 Indeed, in such a case $\theta(G)=\tau(G)$. Therefore, Lemma \ref{lemnull}, \eqref{inqeven} and connectedness of $G$  imply $\eta_A(G) \leq 1$.
\end{rem}

In the view of Lemma \ref{lemnull}, we will use \eqref{inqmain}, \eqref{inqeven}, and \eqref{inqodd} to estimate not only the nullities but in general the multiplicities of the integer eigenvalues of the adjacency, the Laplacian, and the signless Laplacian matrices of $G$. However, since \eqref{inqeven} and \eqref{inqodd} are true only for even and odd graphs, respectively, to be able to benefit from them for an arbitrary graph $G$, we construct an even and an odd graph using $G$ whose graph parameters are related to the graph parameters of $G$ in a suitable way.

\begin{defn}
For a graph $G$ we define ${G_{even}}$ as $G$ itsef if $G$ is even and otherwise as the graph obtained from $G$ by connecting every odd degree vertex of $G$ to a unique external vertex by an edge. We define ${G_{odd}}$ as the graph obtained from $G$ by adding a pendant edge to every even degree vertex of $G$.
\end{defn}

It is clear that $G_{odd}$ is an odd graph. Also $G_{even}$ is an even graph by the handshaking lemma. Let $i(G)$ be the binary valued function which takes the value $0$ if $G$ is even and $1$ if $G$ is odd. Then we have the following lemma.

\begin{lem}
\label{lem8}Let $G$ be a graph and $M$ be a binary valued graph matrix of $G$. Then the followings hold.
\begin{align}
i)\;  &\text{\tiny{$2$}-}\eta_M(G_{even})\geq (\text{\tiny{$2$}-}\eta_M-i)(G),     &v)\; &\text{\tiny{$2$}-}\eta_M(G_{odd})\geq (\text{\tiny{$2$}-}\eta_M-\mathrm{v}_\mathrm{e})(G),  \nonumber\\
ii)\; &\theta(G_{even})=(\theta+\mathrm{v}_\mathrm{o}+\mathrm{c}_\mathrm{e}-\mathrm{c})(G),   &vi)\; &\theta(G_{odd})=\theta(G), \nonumber\\
iii)\; &\mathrm{c}(G_{even})= (\mathrm{c}_\mathrm{e}+i)(G),      &vii)\; &\mathrm{c}(G_{odd})= \mathrm{c}(G),  \nonumber\\
iv)\; &\tau(G_{even})\geq \tau(G),   &viii)\; &\tau(G_{odd})= \tau(G). \nonumber
\end{align}
\end{lem}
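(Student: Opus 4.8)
The plan is to prove the eight identities and inequalities in Lemma~\ref{lem8} separately, organizing them into two groups according to whether they concern $G_{even}$ (parts $i$--$iv$) or $G_{odd}$ (parts $v$--$viii$). The underlying strategy is uniform: I would carefully count how the graph operations defining $G_{even}$ and $G_{odd}$ affect the relevant quantities, exploiting the fact that in both cases we only add new vertices of degree one together with their single incident edges.

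\textbf{The $G_{odd}$ parts (the easier group).} I would start here because the construction is cleaner: $G_{odd}$ is formed by attaching one pendant edge (hence one new degree-one vertex) to each of the $\mathrm{v}_\mathrm{e}(G)$ even-degree vertices. For part $vi$, since adding a pendant edge increases both $\mathrm{v}$ and $\mathrm{e}$ by one while leaving $\mathrm{c}$ unchanged, the formula $\theta=(\mathrm{e}-\mathrm{v}+\mathrm{c})$ immediately gives $\theta(G_{odd})=\theta(G)$; this also shows $\mathrm{c}(G_{odd})=\mathrm{c}(G)$, which is part $vii$. For part $viii$, attaching pendant edges creates no new cycles and destroys none, and in particular adds no odd cycles, so any maximum family of disjoint odd cycles of $G$ is still such a family in $G_{odd}$ and vice versa, whence $\tau(G_{odd})=\tau(G)$. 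Part $v$ is the one genuine inequality in this group: I would show $\text{\tiny{$2$}-}\eta_M(G_{odd})\geq (\text{\tiny{$2$}-}\eta_M-\mathrm{v}_\mathrm{e})(G)$ by relating a kernel vector of $M(G_{odd})$ over $\mathbb{F}_2$ to one of $M(G)$. The natural move is to take a vector in $Ker_2(M(G))$, extend it by zeros on the new pendant vertices, and measure how many linear constraints the new rows and columns impose; each of the $\mathrm{v}_\mathrm{e}(G)$ added vertices can cut the nullity by at most one, which yields the bound. Here I must be careful about how $M$ is defined on the enlarged graph, using that $M$ is a binary graph matrix so that the diagonal entry at each vertex is its degree mod $2$ (and the off-diagonal structure is inherited from adjacency).

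\textbf{The $G_{even}$ parts (the harder group).} When $G$ is already even, $G_{even}=G$ and $i(G)=0$, so every statement is trivial; the content is in the odd case, where each of the $\mathrm{v}_\mathrm{o}(G)$ odd-degree vertices is joined to its own new external vertex. For part $iii$, I would argue that within each even connected component the parity structure is unaffected (these contribute $\mathrm{c}_\mathrm{e}(G)$ components, and nothing attaches to them), while all the odd-degree vertices lie in the odd-or-mixed components and, after the additions, the newly attached leaves keep those components connected; the handshaking-lemma bookkeeping collapses everything into $(\mathrm{c}_\mathrm{e}+i)(G)$ components. Part $ii$ then follows from the Euler-characteristic formula $\theta=(\mathrm{e}-\mathrm{v}+\mathrm{c})$ once I know the change in each of $\mathrm{e}$, $\mathrm{v}$, and $\mathrm{c}$: we add $\mathrm{v}_\mathrm{o}(G)$ new vertices and $\mathrm{v}_\mathrm{o}(G)$ new edges, and $\mathrm{c}$ changes from $\mathrm{c}(G)$ to $(\mathrm{c}_\mathrm{e}+i)(G)$ by part $iii$, so the net effect on $\theta$ is the claimed $(\mathrm{v}_\mathrm{o}+\mathrm{c}_\mathrm{e}-\mathrm{c})(G)$. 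Part $iv$ is the assertion that adding these external edges cannot decrease the maximum number of disjoint odd cycles, which is immediate because $G$ is a subgraph of $G_{even}$ and a disjoint odd-cycle family of $G$ survives intact in $G_{even}$. Part $i$, the inequality $\text{\tiny{$2$}-}\eta_M(G_{even})\geq (\text{\tiny{$2$}-}\eta_M-i)(G)$, is handled exactly as part $v$: extend a kernel vector by zeros on the new leaves, noting that when $G$ is odd the single attachment per odd vertex changes the parity Laplacian in a controlled way, and the nullity drops by at most $i(G)$.

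\textbf{Main obstacle.} The routine identities ($ii$, $iii$, $vi$, $vii$) are pure counting and the $\tau$-statements ($iv$, $viii$) follow from the subgraph relationship, so the real work is in the two $2$-nullity inequalities, parts $i$ and $v$. The delicate point is to make the rank-counting argument over $\mathbb{F}_2$ rigorous: attaching a leaf to a vertex $u$ adds one row/column to $\overbar{M}$, and I must track precisely how this affects the dimension of $Ker_2$. The cleanest route is to bound the \emph{rank} change from above by the number of added vertices (each new row can raise the rank by at most one), then convert to a nullity bound via rank--nullity; the subtlety is that the added vertex also contributes to the new vertex count, so the two effects must be reconciled carefully to land on exactly $-\mathrm{v}_\mathrm{e}(G)$ in part $v$ and exactly $-i(G)$ in part $i$ rather than a looser constant. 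I expect this rank-bookkeeping over $\mathbb{F}_2$, especially getting the diagonal parity entries of the enlarged parity Laplacian right, to be where the proof demands the most care.
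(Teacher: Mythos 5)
Your handling of the $G_{odd}$ group (parts $v$--$viii$) is essentially sound and close to the paper's: $vi$--$viii$ are routine counting, and $v$ follows from the fact that deleting a vertex (passing to a principal submatrix over $\mathbb{F}_2$) lowers the $2$-nullity by at most one, applied once per added pendant. The genuine gap is in the $G_{even}$ group, and it stems from a misreading of the construction. In the paper, $G_{even}$ is obtained by adding a \emph{single} external vertex $u$ and joining \emph{every} odd-degree vertex of $G$ to that one vertex (this is what ``a unique external vertex'' means; the paper's proof writes $G=G_{even}-u$ and uses $\mathrm{v}(G_{even})=(\mathrm{v}+i)(G)$). You instead attach a separate new leaf to each of the $\mathrm{v}_\mathrm{o}(G)$ odd-degree vertices, and under that reading everything breaks: the new leaves have degree one, so your $G_{even}$ is not even (the paper's is, by the handshaking lemma, since $u$ has even degree $\mathrm{v}_\mathrm{o}(G)$), which destroys the very purpose of the construction, namely applying \eqref{inqeven} to $G_{even}$. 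Concretely, for $iii$, attaching disjoint leaves to distinct components can never merge them, so you would get $\mathrm{c}(G_{even})=\mathrm{c}(G)$; your claim that the leaves ``collapse everything into $(\mathrm{c}_\mathrm{e}+i)(G)$ components'' is false in your setting --- it is the shared vertex $u$ that merges all non-even components into one. For $ii$, your own counts are inconsistent with the conclusion you assert: with $\Delta\mathrm{e}=\Delta\mathrm{v}=\mathrm{v}_\mathrm{o}(G)$ and $\mathrm{c}(G_{even})=(\mathrm{c}_\mathrm{e}+i)(G)$, the formula $\theta=\mathrm{e}-\mathrm{v}+\mathrm{c}$ gives $\theta(G_{even})=(\theta+\mathrm{c}_\mathrm{e}+i-\mathrm{c})(G)$, not the claimed $(\theta+\mathrm{v}_\mathrm{o}+\mathrm{c}_\mathrm{e}-\mathrm{c})(G)$; the $\mathrm{v}_\mathrm{o}$ term arises precisely because $\mathrm{v}_\mathrm{o}(G)$ edges but only \emph{one} vertex are added. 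For $i$, with $\mathrm{v}_\mathrm{o}(G)$ added vertices your per-vertex bookkeeping can only yield a loss of $\mathrm{v}_\mathrm{o}(G)$, never $i(G)$; the bound $-i(G)$ is available exactly because $G=G_{even}-u$ is a one-vertex deletion.

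A secondary issue concerns your method for the two nullity inequalities. Extending a vector of $Ker_2(M(G))$ by zeros need not produce a vector of $Ker_2(M(G_{even}))$ or $Ker_2(M(G_{odd}))$ (the new rows impose constraints), so the argument must run in the opposite direction, as the paper does: $G$ is recovered from $G_{even}$ or $G_{odd}$ by vertex deletions, and each deletion of a row-and-column pair lowers $\text{\tiny{$2$}-}\eta_M$ by at most one. Your rank-counting fallback can be made equivalent to this (note each added vertex adds a row \emph{and} a column, so the rank can rise by two while the order rises by one, netting a nullity loss of at most one), but it requires that the principal submatrix of the enlarged matrix on $V(G)$ equal $M(G)$. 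That holds for $M=A$ and $M=A+I$, which are the cases the paper actually uses, but it fails for the degree-parity diagonal you insist on: attaching pendants in $G_{odd}$ flips the degree parity of every even-degree vertex of $G$, so the old diagonal block of the parity Laplacian changes. So the matrix you single out as the main case is precisely the one for which your bookkeeping scheme does not apply.
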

\begin{proof}
We will only prove (i), (ii), (iii), and (v) since the rest are obvious.

For (i) and (v) let $H-v$ denote the induced subgraph obtained from a graph $H$ by removing one of its vertices $v$. Then the binary graph matrix $M(H-v)$  is the principal submatrix of $M(H)$ obtained by deleting the row and column of $M(H)$ corresponding to the vertex $v$. It is an elementary fact that in such a case $\text{\tiny{$2$}-}\eta_{M}(H-v)\leq \text{\tiny{$2$}-}\eta_{M}(H)+1$. If $G$ is even then $G_{even}=G$ and there is nothing to prove. Otherwise, $G=G_{even}-u$, where $u$ is the external vertex we added to $G$ to obtain $G_{even}$. Hence (i) immediately follows from this inequality. Also the same inequality proves (v) since $G$ can be obtained from $G_{odd}$ by removing $\mathrm{v}_\mathrm{e}(G)$ number of pendant vertices of $G_{odd}$ one by one.

(iii) follows from the fact that not even components of $G$ become connected to each other in $G_{even}$ via the paths of length two passing trough the external vertex.

(ii) follows from combining the facts that $\mathrm{e}(G_{even})= (\mathrm{e}+\mathrm{v}_\mathrm{o})(G)$, $\mathrm{v}(G_{even})=(\mathrm{v}+i)(G)$, and $\mathrm{c}(G_{even})= (\mathrm{c}_\mathrm{e}+i)(G)$.
\end{proof}

\begin{thm} \label{thmev} Let $G$ be a graph. Then
\begin{align}
\label{inqa}\text{\tiny{$2$}-}\eta_{A}(G)&\leq (\theta-\tau+\mathrm{v}_\mathrm{o}+2\mathrm{c}_\mathrm{e}-\mathrm{c}+2i)(G),\\
\label{inqai}\text{\tiny{$2$}-}\eta_{A+I}(G)&\leq (\theta-\tau+\mathrm{v}_\mathrm{e}+\mathrm{c})(G),
\end{align}
\end{thm}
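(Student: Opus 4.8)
The plan is to prove the two inequalities \eqref{inqa} and \eqref{inqai} by reducing the arbitrary graph $G$ to the even (respectively odd) graphs $G_{even}$ and $G_{odd}$, for which \eqref{inqeven} and \eqref{inqodd} already apply, and then translate the resulting bounds back to $G$ using the parameter relations collected in Lemma \ref{lem8}. The key observation is that the constructed graphs have the right parity so that Theorem \ref{thmmain}'s specialization applies, while Lemma \ref{lem8} controls exactly how much the relevant quantities change under the construction.

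For \eqref{inqa}, I would first note that $G_{even}$ is an even graph, so applying \eqref{inqeven} to $G_{even}$ gives
\begin{equation}
\text{\tiny{$2$}-}\eta_{A}(G_{even})\leq (\theta-\tau+\mathrm{c})(G_{even}).\nonumber
\end{equation}
The left side is bounded below by $(\text{\tiny{$2$}-}\eta_A-i)(G)$ via Lemma \ref{lem8}(i), which rearranges to $\text{\tiny{$2$}-}\eta_A(G)\leq \text{\tiny{$2$}-}\eta_A(G_{even})+i(G)$. On the right side I would substitute the identities of Lemma \ref{lem8}: part (ii) gives $\theta(G_{even})=(\theta+\mathrm{v}_\mathrm{o}+\mathrm{c}_\mathrm{e}-\mathrm{c})(G)$ and part (iii) gives $\mathrm{c}(G_{even})=(\mathrm{c}_\mathrm{e}+i)(G)$. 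The subtle point is the $\tau$ term: since Lemma \ref{lem8}(iv) only gives $\tau(G_{even})\geq \tau(G)$ (an inequality in the \emph{wrong} direction for a term being subtracted), I must be careful to use $-\tau(G_{even})\leq -\tau(G)$, which works in our favor because it makes the upper bound larger, hence still valid. Combining $\theta(G_{even})-\tau(G_{even})+\mathrm{c}(G_{even})\leq (\theta-\tau+\mathrm{v}_\mathrm{o}+\mathrm{c}_\mathrm{e}-\mathrm{c})(G)+(\mathrm{c}_\mathrm{e}+i)(G)$ and adding back the $i(G)$ from the left-side reduction yields $(\theta-\tau+\mathrm{v}_\mathrm{o}+2\mathrm{c}_\mathrm{e}-\mathrm{c}+2i)(G)$, which is exactly the claimed bound.

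For \eqref{inqai}, the structure is parallel but uses $G_{odd}$, which is always an odd graph, so \eqref{inqodd} applies to give $\text{\tiny{$2$}-}\eta_{A+I}(G_{odd})\leq (\theta-\tau+\mathrm{c})(G_{odd})$. Here I would use Lemma \ref{lem8}(v), $\text{\tiny{$2$}-}\eta_M(G_{odd})\geq(\text{\tiny{$2$}-}\eta_M-\mathrm{v}_\mathrm{e})(G)$ with $M=A+I$, to get $\text{\tiny{$2$}-}\eta_{A+I}(G)\leq \text{\tiny{$2$}-}\eta_{A+I}(G_{odd})+\mathrm{v}_\mathrm{e}(G)$, and then substitute the cleaner identities $\theta(G_{odd})=\theta(G)$, $\mathrm{c}(G_{odd})=\mathrm{c}(G)$, and $\tau(G_{odd})=\tau(G)$ from parts (vi)--(viii). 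This collapses immediately to $(\theta-\tau+\mathrm{v}_\mathrm{e}+\mathrm{c})(G)$. One checkpoint I would verify is that $M=A+I$ really is a binary-valued graph matrix of $G_{odd}$ in the sense required by Lemma \ref{lem8}, so that the principal-submatrix argument underlying (v) legitimately applies when deleting the pendant vertices one at a time.

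The part requiring the most care is tracking the direction of the $\tau$ inequality in \eqref{inqa}: because $\tau$ enters with a minus sign, the inequality $\tau(G_{even})\geq\tau(G)$ does \emph{not} let us replace $\tau(G_{even})$ by $\tau(G)$ freely, and I must ensure the replacement is done so as to only enlarge (never shrink) the upper bound. I expect everything else to be bookkeeping: the proof is essentially a substitution of Lemma \ref{lem8}'s eight identities into the two specialized bounds \eqref{inqeven} and \eqref{inqodd}, combined with the $2$-nullity monotonicity under vertex deletion that those identities encode.
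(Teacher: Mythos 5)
Your proposal is correct and follows essentially the same route as the paper: apply \eqref{inqeven} to $G_{even}$ and \eqref{inqodd} to $G_{odd}$, then translate back to $G$ via the eight relations in Lemma \ref{lem8}, with the $\tau$-inequality used in the harmless direction ($-\tau(G_{even})\leq -\tau(G)$) exactly as the paper does. The bookkeeping in both inequalities matches the paper's computation, so there is nothing to add.
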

\begin{proof}
\eqref{inqeven} and Lemma \ref{lem8} imply
\begin{align}
\text{\tiny{$2$}-}\eta_{A}(G)&\leq \text{\tiny{$2$}-}\eta_A(G_{even})+i(G) \nonumber \\
                             &\leq (\theta-\tau+\text{c})(G_{even})+i(G) \nonumber \\
                             &\leq (\theta+\mathrm{v}_\mathrm{o}+\mathrm{c}_\mathrm{e}-\mathrm{c}-\tau +\mathrm{c}_\mathrm{e}+i)(G)+i(G) \nonumber \\
                             &=(\theta-\tau+\mathrm{v}_\mathrm{o}+2\mathrm{c}_\mathrm{e}-\mathrm{c}+2i)(G)
\end{align}
and
\begin{align}
\text{\tiny{$2$}-}\eta_{A+I}(G)&\leq \text{\tiny{$2$}-}\eta_A(G_{odd})+\mathrm{v}_\mathrm{e}(G) \nonumber \\
                               &\leq (\theta-\tau+\text{c})(G_{odd})+\mathrm{v}_\mathrm{e}(G)  \nonumber \\
                               &= (\theta-\tau+\mathrm{v}_\mathrm{e}+\mathrm{c})(G). \nonumber
\end{align}
\end{proof}

Theorem \ref{thmmain} and Theorem \ref{thmev}, together with Lemma \ref{lemnull}, complete the proof of Theorem \ref{thethm} for the first three bounds. On the other hand, the following theorem with Lemma \ref{lemnull} gives us the last bound.

\begin{thm}
\label{thmtirt}
For any graph $G$
\begin{equation}
\label{inqtirt}
\text{\tiny{$2$}-}\eta_{P+I}(G)\leq (\mathrm{v}-\beta-\mathrm{c})(G).
\end{equation}
\end{thm}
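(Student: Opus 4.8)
The plan is to turn the desired nullity bound into a statement about two kernels over $\mathbb{F}_2$ meeting trivially, which is where Proposition \ref{propmain} enters. By Proposition \ref{propmain} we have $(\beta+\mathrm{c})(G)=\text{\tiny{$2$}-}\eta_P(G)$, so the target inequality \eqref{inqtirt} is equivalent to
$$\text{\tiny{$2$}-}\eta_P(G)+\text{\tiny{$2$}-}\eta_{P+I}(G)\leq \mathrm{v}(G).$$
Thus it suffices to show that the two $2$-nullities add up to at most the ambient dimension $\mathrm{v}(G)$, where $P=P(G)$ is the $\mathrm{v}(G)\times\mathrm{v}(G)$ parity Laplacian and all computations are carried out over $\mathbb{F}_2$.

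First I would check that $Ker_2(P)\cap Ker_2(P+I)=\{0\}$. Indeed, if a vector $x$ lies in both kernels, then $Px=0$ and $(P+I)x=0$ over $\mathbb{F}_2$; adding these two relations gives $(2P+I)x=0$, and since $2P$ vanishes modulo $2$ this reads $Ix=x=0$. Consequently the sum $Ker_2(P)+Ker_2(P+I)$ is a direct sum, so
$$\text{\tiny{$2$}-}\eta_P(G)+\text{\tiny{$2$}-}\eta_{P+I}(G)=\dim\bigl(Ker_2(P)+Ker_2(P+I)\bigr)\leq \mathrm{v}(G),$$
the last inequality holding because this sum is a subspace of $\mathbb{F}_2^{\,\mathrm{v}(G)}$. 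Combining this with the reduction of the first paragraph and Proposition \ref{propmain} yields $\text{\tiny{$2$}-}\eta_{P+I}(G)\leq \mathrm{v}(G)-\text{\tiny{$2$}-}\eta_P(G)=(\mathrm{v}-\beta-\mathrm{c})(G)$, which is exactly \eqref{inqtirt}.

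I do not expect a genuine obstacle here; the argument is essentially self-contained once Proposition \ref{propmain} is in hand. The only point that needs care is that every step takes place over $\mathbb{F}_2$, since the crucial cancellation $P+(P+I)=2P+I=I$ relies on $2P\equiv 0$ and would fail over $\mathbb{R}$. An equivalent, dual route would phrase the same idea through ranks: from $P+(P+I)=I$ and the subadditivity $\text{\tiny{$2$}-}r_{X+Y}\leq \text{\tiny{$2$}-}r_X+\text{\tiny{$2$}-}r_Y$ one gets $\mathrm{v}(G)=\text{\tiny{$2$}-}r_I\leq \text{\tiny{$2$}-}r_P+\text{\tiny{$2$}-}r_{P+I}$, and then the rank--nullity theorem over $\mathbb{F}_2$ together with Proposition \ref{propmain} delivers the same bound; either formulation is short.
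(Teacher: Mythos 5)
Your proof is correct, and it takes a route that is parallel to but not identical with the paper's. The paper's key step is the inclusion $Ker_2(P(G)+I)\subseteq Col_2(P(G))$: over $\mathbb{F}_2$ the equation $(P+I)x=0$ reads $Px=x$, so every kernel vector of $P+I$ lies in the column space of $P$; this gives $\text{\tiny{$2$}-}\eta_{P+I}(G)\leq \text{\tiny{$2$}-}r_{P}(G)$, and then rank--nullity plus Proposition \ref{propmain} finishes. Your key step is instead the trivial intersection $Ker_2(P)\cap Ker_2(P+I)=\{0\}$, obtained from $x=(P+(P+I))x=0$, which makes the sum of the two kernels direct and bounds the sum of the two $2$-nullities by $\mathrm{v}(G)$. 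Both arguments are one-line consequences of the same $\mathbb{F}_2$ identity $P+(P+I)=I$, and both funnel through Proposition \ref{propmain} to convert $\text{\tiny{$2$}-}\eta_P(G)$ into $(\beta+\mathrm{c})(G)$; the difference is in which dimension count realizes the complementarity. Your version is symmetric in $P$ and $P+I$ and never mentions column spaces, which makes it slightly more elementary and immediately generalizable to any pair of matrices over any field summing to an invertible matrix; the paper's version carries a little extra structural information, namely \emph{where} $Ker_2(P+I)$ sits. Note also that your closing ``dual route'' via rank subadditivity ($\mathrm{v}(G)=\text{\tiny{$2$}-}r_I\leq \text{\tiny{$2$}-}r_P+\text{\tiny{$2$}-}r_{P+I}$ followed by rank--nullity) is yet a third phrasing of the same fact, and is the one closest in spirit to the paper's computation.
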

\begin{proof}
First note that $Ker_2(P(G)+I)\subseteq Col_2(P(G))$. Indeed,\\ $(P(G)+I)x=0$ is equivalent to say that $P(G)x=x$. Therefore, together with the rank nullity theorem and Proposition \ref{propmain}
we have
\begin{align}
\text{\tiny{$2$}-}\eta_{P+I}(G)&\leq \text{\tiny{$2$}-}r_{P}(G) \nonumber \\
                               &=(\mathrm{v}-\text{\tiny{$2$}-}\eta_P)(G)\nonumber  \\
                               &= (\mathrm{v}-\beta-\mathrm{c})(G). \nonumber
\end{align}
\end{proof}


\section{Multiplicities of the eigenvalues of $A_{\alpha}$}
Nikiforov introduced the graph matrix $A_\alpha(G)$ in \cite{Nikiforov17} (See introduction for the definition). It is called the generalized adjacency matrix of $G$. Since $A_0(G)=A(G)$ and $2A_{1/2}(G)=Q(G)$, the spectral analysis of  $A_\alpha(G)$ combines the spectral analysis of $A(G)$ and $Q(G)$. Therefore, later on the spectral properties of $A_\alpha(G)$ have been studied by lots of researcher among which  Shuchao and Wei in \cite{Wei20} showed that $m_{A_\alpha}(G, \lambda) \leq (2\theta+p)(G)$ for all $\alpha\in[0,1]$, and for all $\lambda\in \mathbb{R}$. Hence they generalized the results in \cite{Wong16} and \cite{Wang20} to the multiplicities of the eigenvalues of $A_\alpha(G)$. Our method can also be applied to this graph matrix in the case $\alpha$ and $\lambda$ are rational numbers. Indeed, we obtain the following results.

\begin{thm}
\label{thmalpha}
Let $G$ be a graph and $a/b$ and $p/q$ be rational numbers in their lowest forms. Then we have the following table.
\begin{table}[h]
\label{table1}
\caption{ Bounds of $m_{A_{a/b}}(G, p/q)$} 
\centering 
\begin{tabular}{|c|c|c|c|l|} 
\hline\hline 
\small  $a$ & \small $b$ & \small $p$  & \small $q$   & \small { $m_{A_{a/b}}(G, p/q)$ is less than or equal to} \\ \hline
odd & odd & odd & odd &   $\text{\tiny{$2$}-}\eta_{\overbar{D}+I}(G)= \mathrm{v}_\mathrm{o}(G)$  \\ [0.3ex] \hline
even & odd & odd & odd &  $\text{\tiny{$2$}-}\eta_{A+I}(G)\leq (\theta-\tau+\mathrm{v}_\mathrm{e}+\mathrm{c})(G) $ \\  [0.3ex]\hline
odd & odd & even & odd &  $\text{\tiny{$2$}-}\eta_{\overbar{D}}(G)=\mathrm{v}_\mathrm{e}(G)$  \\ [0.3ex]\hline
even & odd & even & odd &  $\text{\tiny{$2$}-}\eta_{A}(G)\leq (\theta-\tau+\mathrm{v}_\mathrm{o}+2\mathrm{c}_\mathrm{e}-\mathrm{c}+2i)(G)$  \\ [0.3ex]\hline
odd & even & - & odd &  $\text{\tiny{$2$}-}\eta_{P}(G)\leq (\theta-\tau+\mathrm{c})(G) $  \\\hline
  - & odd & odd & even &   $\text{\tiny{$2$}-}\eta_{I}=0 $  \\ [0.1ex]\hline
\end{tabular}
\label{table1}
\end{table}
\end{thm}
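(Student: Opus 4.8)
The plan is to turn the eigenvalue equation $A_{a/b}(G)x=(p/q)x$ into a kernel equation for an \emph{integer} matrix and then feed it into Lemma~\ref{lemnull}. Since $A_{a/b}(G)=\tfrac{a}{b}D(G)+\tfrac{b-a}{b}A(G)$ and $b,q\neq 0$, multiplying $A_{a/b}(G)-\tfrac{p}{q}I$ by the nonzero scalar $bq$ does not change its kernel, so
$$m_{A_{a/b}}(G,p/q)=\eta_M(G),\qquad M:=aq\,D(G)+(b-a)q\,A(G)-bp\,I.$$
Here $M$ is a symmetric integer matrix (indeed a graph matrix of $G$ in the sense of the introduction, with diagonal part $aq\,D(G)-bp\,I$ and $t=(b-a)q$), so Lemma~\ref{lemnull} yields $m_{A_{a/b}}(G,p/q)=\eta_M(G)\leq \text{\tiny{$2$}-}\eta_{\overbar{M}}(G)$. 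The entire problem then reduces to identifying the binary matrix $\overbar{M}$.

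Next I would record $\overbar{M}$ explicitly. The diagonal entry of $M$ at a vertex of degree $d$ is $aq\,d-bp$, and the off-diagonal entries are $(b-a)q\,a_{ij}$; since $A(G)$ is a $0/1$ matrix we have $\overbar{A}=A$, whence
$$\overbar{M}=\overbar{aq}\,\overbar{D}(G)+\overbar{(b-a)q}\,A(G)+\overbar{bp}\,I.$$
Thus $\overbar{M}$ is completely determined by the three parities $\overbar{aq}$, $\overbar{(b-a)q}$, $\overbar{bp}$, which in turn depend only on the parities of $a,b,p,q$. I would then run through the rows of the table using the hypotheses $\gcd(a,b)=1$ and $\gcd(p,q)=1$, so that neither $(a,b)$ nor $(p,q)$ is a pair of even numbers. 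When $a,b,p,q$ are all odd we get $\overbar{aq}=\overbar{bp}=1$ and $\overbar{(b-a)q}=0$, so $\overbar{M}=\overbar{D}+I$, a diagonal matrix whose diagonal vanishes exactly at the odd-degree vertices; hence its $2$-nullity is $\mathrm{v}_\mathrm{o}(G)$. Replacing $p$ by an even numerator flips $\overbar{bp}$ to $0$, giving $\overbar{M}=\overbar{D}$, whose $2$-nullity is $\mathrm{v}_\mathrm{e}(G)$. Taking $a$ even (with $b,p,q$ odd) kills the $\overbar{D}$ term and switches on the adjacency term, so $\overbar{M}=A+I$ or $\overbar{M}=A$ according to the parity of $p$, and for these I would quote \eqref{inqai} and \eqref{inqa} from Theorem~\ref{thmev}.

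The two remaining rows use the $\ols{}$ wildcards of the table. When $b$ is even (which forces $a$ odd) and $q$ is odd, both $\overbar{aq}$ and $\overbar{(b-a)q}$ equal $1$ while $\overbar{bp}=0$ irrespective of $p$, so $\overbar{M}=\overbar{D}+A=P(G)$, and Theorem~\ref{thmmain} (inequality \eqref{inqmain}) supplies $(\theta-\tau+\mathrm{c})(G)$. When $q$ is even (forcing $p$ odd) and $b$ is odd, the first two parities vanish while $\overbar{bp}=1$ regardless of $a$, so $\overbar{M}=I$ and the $2$-nullity is $0$. In every case the chain $m_{A_{a/b}}(G,p/q)\leq \text{\tiny{$2$}-}\eta_{\overbar{M}}(G)$ then gives exactly the tabulated bound.

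The only genuine work is the parity bookkeeping, together with verifying that the coprimality hypotheses make the listed rows exhaustive of the \emph{useful} cases; I expect this to be the main, though modest, obstacle. The one parity pattern absent from the table is $b$ and $q$ both even (necessarily with $a,p$ odd), in which all three coefficients $\overbar{aq},\overbar{(b-a)q},\overbar{bp}$ vanish, so $\overbar{M}=0$ and Lemma~\ref{lemnull} returns only the trivial bound $m_{A_{a/b}}(G,p/q)\leq \mathrm{v}(G)$; this explains why that case is omitted. The worked reductions to $\overbar{D}$, $\overbar{D}+I$, $A$, $A+I$, $P$, and $I$ show that no new inequalities are needed beyond those already established, so the proof is essentially the case table above.
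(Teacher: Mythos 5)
Your proposal is correct and follows essentially the same route as the paper: clear denominators to get the integer graph matrix $M=qa\,D+q(b-a)A-bp\,I$, apply Lemma~\ref{lemnull}, reduce mod $2$ to $\overbar{M}=\overbar{qa}\,\overbar{D}+\overbar{q(b-a)}\,A+\overbar{bp}\,I$, and finish by parity case analysis using Theorems~\ref{thmmain} and~\ref{thmev}. Your write-up is in fact more explicit than the paper's (which leaves the case-by-case identification of $\overbar{M}$ to the reader), and you correctly observe both that the excluded case $b,q$ even yields $\overbar{M}=0$ and hence only the trivial bound, and that Theorem~\ref{thmtirt} is never actually invoked since $\overbar{M}=P+I$ cannot occur under the coprimality hypotheses.
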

\begin{proof}
Note that
\begin{align}
 Ker(A_{a/b}-(p/q)I)&= Ker(qA_{a/b}-pI)        \nonumber\\
                  &= Ker((qa/b)D+q(1-a/b)A-pI) \nonumber\\
                  &= Ker(qaD+q(b-a)A-bpI). \nonumber
\end{align}
So if we let $M= qaD+q(b-a)A-bpI$, then we have $$m_{A_{a/b}}(G, p/q)=\eta_{(A_{a/b}-(p/q)I)}= \eta_M \leq \text{\tiny{$2$}-}\eta_{\overbar{M}} $$
by Lemma \ref{lemnull}.
On the other hand, $$\overbar{M}= \overbar{q}\overbar{a}\overbar{D}+\overbar{q}(\overbar{b}+\overbar{a})A+\overbar{b}\overbar{p}I.$$
Now the results follow by finding the special form of $\overbar{M}$ in each case and applying Theorems \ref{thmmain}, \ref{thmev}, and \ref{thmtirt}.
\end{proof}

\begin{rem}
Our method in the not included case in Theorem \ref{thmalpha} does not give a nontrivial bound to the corresponding multiplicity.
\end{rem}

\section{Existence of the extremal graphs}

Before giving characterizations of the extremal graphs whose eigenvalue multiplicities attain the corresponding bound in Theorem \ref{thethm}, we think that one should first examine the extremal graphs whose $2$-nullities of the corresponding graph matrix attain the bound given in Theorems \ref{thmmain}, \ref{thmev}, and \ref{thmtirt} since the bounds in the first theorem were obtained by the bounds of the $2$-nullities in the latter theorems by directly applying Lemma \ref{lemnull}. On the other hand, this analysis seems to require a comprehensive study. Note that there are four graph matrices whose $2$-nullities need to be examined, namely $A(G)$, $A(G)+I$, $P(G)$, and $P(G)+I$. Unfortunately, as far as we know, $2$-nullities of the corresponding matrices is not a well studied area and their properties such as how they change under certain graph operations are not examined enough yet. The one exception might be the $2$-nullity of $A+I$. Because of its connection with the Lights Out game and also with the parity domination theory some of its properties have been examined in several studies such as \cite{Amin92}, \cite{Amin96}, \cite{Amin98}, \cite{Amin02}, \cite{Ballard19} . So we hope that after this study, there will be more attraction to the analysis of the $2$-nullities of the corresponding graph matrices by other researchers as well.

One thing we can say for sure for now is that all of the bounds in Theorem \ref{thethm}, an hence in Theorems \ref{thmmain}, \ref{thmev}, and \ref{thmtirt} are attained. For example it is easy to check that for each $k$, the cycle $C_{k}$ is an extremal graph attaining the bounds in \eqref{inqmain} and \eqref{inqa}. Moreover, $m_A(C_{4k}, 0)= m_L(C_{4k}, 2)= m_Q(C_{4k}, 2)=2$ which is equal to the corresponding bounds in the first and third inequalities in Theorem \ref{thethm}.

On the other hand, any odd tree $T$ is an extremal graph attaining the bound in \eqref{inqai}. Indeed, $\text{\tiny{$2$}-}\eta_{A+I}(T)\leq 1$ by  \eqref{inqai}. On the other hand, the vector $(1,\dots,1)^t$ is in $Ker_2(A(T)+I)$ since $T$ is odd, hence $\text{\tiny{$2$}-}\eta_{A+I}(T)=1$. Moreover, it is not hard to see that for the odd tree $H$, which is in the shape of the letter H, we have $m_A(H,1)=1$ which is the bound we obtain using the second inequality in Theorem \ref{thethm}.

Lastly, any complete graph $K_{2k+1}$ with $2k+1$ vertices is an extremal graph attaining the bound in \eqref{inqtirt}. Indeed $P(K_{2k+1})+I$ is the square matrix all of whose entries are $1$. Therefore, $\text{\tiny{$2$}-}\eta_{P+I}(K_{2k+1})= 2k$ which is the largest possible bound we can obtain using \eqref{inqtirt}. Moreover, $m_L(K_{2k+1},2k+1)=m_Q(K_{2k+1},2k-1)=2k$. Hence, the bound in the last inequality of Theorem \ref{thethm} is also attained.

\textbf{Declaration of Competing Interests} The author declares that he has no
known competing financial interests or personal relationships that could have appeared to influence the work reported in this paper. \medskip

\end{document}